\newtheorem{thm}{Theorem}
\newtheorem{rem}{Remark}
\newcommand{\rn}[1]{\mathbb{R}^{#1}}
\newcommand{\gotico}{\mathfrak}
\newcommand{\Ad}{\mbox{Ad}}
\newcommand{\F}{\mathbb{F}}
\def\bq{\begin{equation}}
\def\eq{\end{equation}}
\title{The Ricci flow of left invariant metrics on full flag manifold SU(3)/T from a dynamical systems point of view}
\author{Lino Grama and Ricardo M. Martins}
\begin{document}
\maketitle


\begin{abstract}
In this paper we study the behavior of the Ricci flow at infinity for the full flag manifold $SU(3)/T$ using techniques of the qualitative theory of differential equations, in special the Poincar\'e Compactification and Lyapunov exponents. We prove that there are four invariant lines for the Ricci flow equation, each one associated with a singularity corresponding to a Einstein metric. In such manifold, the bi-invariant normal metric is Einstein. Moreover, around each invariant line there is a cylinder of initial conditions such that the limit metric under the Ricci flow is the corresponding Einstein metric; in particular we obtain the convergence of left-invariant metrics to a bi-invariant metric under the Ricci flow.
\end{abstract}


\section{Introduction}

Let $M$ be a manifold and consider the Ricci flow equation, introduced by L. Hamilton in 1982 \cite{hamilton}, defined by
\begin{equation}
\label{eqnricci}
\frac{\partial g(t)}{\partial t}=-2Ric(g(t)),
\end{equation}
where $Ric(g)$ is the Ricci tensor of the Riemannian metric $g$. The solution of this equation, the so called Ricci flow, is a $1$-parameter family of metrics $g(t)$ in $M$. Intuitively, this is the heat equation for the metric $g$ (\cite{tian}).

Let $G$ be  a compact, connected and semisimple Lie group. A flag manifold is a homogeneous space $M=G/H$, where $H$ is the centralizer of a $1$-parameter subgroup of $G$, $\exp tX$, where $X$ is in the Lie algebra  $\gotico{g}$ of $G$. In a equivalent way, a flag manifold is the orbit of the adjoint action from $G$ to $\gotico{g}$ of an element $X\in \gotico{g}$ (see \cite{arva1}). In the context of homogeneous spaces, the study of objects that are invariant by the action of $G$ in $M$ is very natural, for example metrics, tensors, connections and so.

The main aim of this paper is to study the equations of the Ricci flow of left invariant metrics in flag manifolds. The equations of the Ricci flow of invariant metrics in Lie groups were studied in \cite{arteaga}. Specifically, we are interested in the equation \eqref{eqnricci} restricted to the set of the metrics $SU(3)$-invariants in the flag manifold \bq\label{Msu}M=\frac{SU(3)}{S(U(1)\times U(1) \times U(1))}.\eq It is know that flag manifolds admits an infinite number of invariant non-equivalent metrics. The Ricci tensor of invariant metrics in flag manifolds were calculated in \cite{arva2} and \cite{sakane}. 

The restriction from the set of metrics to the set of invariant metrics is also done in the case of Einstein metrics. We say that a metric $g$ is a Einstein metric if it satisfy the Einstein equation $Ric(g)=c g$, for some constant $c$. The Einstein equation is a system of PDEs, but when restricted to the set of invariant metrics becomes a algebraic non-linear system of equations (for instance, \cite{arva2} and \cite{evandro}).

The Ricci flow equation \eqref{eqnricci} is a nonlinear system of PDEs. When restricted to the set of invariant metrics, such system reduces to a autonomous nonlinear system of ODEs. Moreover, one of the main question about general Ricci flows is to study when and to where $g(t)$ converges when $t\rightarrow\infty$, denoted by $g_\infty$. Because of this, it is natural the proceed the study of the Ricci flow from a qualitative point of view, using tools from the theory of dynamical systems. A recent paper that also study the Ricci flow equation from a dynamical systems point of view is \cite{ds1}.

In the case we want to study, namely when $M$ is given by \eqref{Msu}, the Ricci flow equation \eqref{eqnricci} has a huge expression and it is, in some sense, equivalent to a cubic homogeneous system of differential equations in $\rn{3}$. This is a very wild scenario to study the behavior of the trajectories of a system of differential equations. For example, it is very hard to determine if there is some closed trajectory or some attracting set.  Nevertheless, we can study the behavior of system \eqref{eqnricci} at infinity, using a method introduced by Poincaré, the Poincaré Compactification. To determine the stability of the solutions, we use the Lyapunov exponents; we refer to  \cite{wiggins} for a detailed exposition of this technique.

\section{Invariant Metrics and Ricci Tensor Equations in Flag Manifolds}
Let $G$ be a compact, connect and semisimple Lie group and denote by $\gotico{g}$ the Lie algebra of $G$. Let $H$ be a closed  connect subgroup of $G$, with Lie algebra $\gotico{h}$, and consider the homogeneous space $G/H$. The point $o=eH$ is called the origin of the homogeneoous space. 

Since $G$ is compact and semisimple, the Cartan-Killing form of $\gotico{g}$ is nondegenerete and negative-definite and we will denote by $K$ the negative of the Cartan-Killing form. In this case, we say that the homogeneous space $G/H$ is reductive, that is, $
\gotico{g}=\gotico{h}\oplus \gotico{m}$ and $\Ad(H)\gotico{m}\subset \gotico{m}$, where $\gotico{m}=\gotico{h}^{\perp}$. The tangent space at origin $o$ is naturally identicate with $\gotico{m}$. We define the isotropy representation $j:H \rightarrow \mbox{GL}(\gotico{m})$ given by $j(h)=\Ad (h)\big|_{\gotico{m}}$ with $h\in H$. Then we can see $\gotico{m}$ how $\Ad(H)$-module.  

A Riemannian metric in $G/H$ is left-invariant (or simply invariant) if the diffeomorphism $L_a:G/H \rightarrow G/H$ given by $L_a(gH)=agH$ is an isometry for all $a\in G$. Left invariant metric are completely determined by its value at the origin $o$. 

If ${G}/{H}$ is reductive with an $\Ad(H)$-invariant decomposition $\gotico{g}=\gotico{h}\oplus\gotico{m}$, then there is a natural one-to-one correspondence between the $G$-invariant Riemannian metrics $g$ on ${G}/{H}$ and the $\Ad(H)$-invariant scalar product $B$ on $\gotico{m}$, see \cite{kob}.

In this work, we consider the full flag manifold $\F(3)={SU(3)}/{T}$, where $T=S(U(1)\times U(1) \times U(1))$ is the maximal torus of $SU(3)$. 

Given the flag manifold $\F(3)$, fix a reductive decomposition of the Lie algebra $\gotico{su}(3)$. Let $g$ be an invariant metric and $B$ the $\Ad$-invariant scalar product on $\gotico{m}$ corresponding to $g$. Then $B$ have the form $B(X,Y)=K(\Lambda X,Y)$, where the linear operator $\Lambda:\gotico{m}\rightarrow\gotico{m}$ is symmetric and positive with respect to the Cartan-Killing form of the $\gotico{su}(3)$. We will denote simply by $\Lambda$ an invariant metric $g$ on $\F(3)$.

Let $\gotico{m}=\gotico{m}_1 \oplus \ldots\oplus\gotico{m}_s $ be a decomposition of $\gotico{m}$ into irreducible non-equivalent $\Ad(H)$-submoules. Then (\cite{arva1}) $\Lambda \big|_{\gotico{m}_i}=\lambda_i\mbox{Id}\big|_{\gotico{m}_i} $ and the number of the parameters of a invariant metric in a flag manifold  is equal to the number of irreducible non-equivalent $\Ad(H)$-submoules. In case of the flag manifold $\F(3)$ this number is $3$, see \cite{arva2} and an inariant scalar product has the form $B(X,Y)=\lambda_{12} \cdot K(X,Y)\big|_{\gotico{m}_{12}}\oplus \lambda_{23} \cdot K(X,Y)\big|_{\gotico{m}_{23}} \oplus \lambda_{13} \cdot K(X,Y)\big|_{\gotico{m}_{13}} $. For more details about the decomposition of isotropy representation we recommend \cite{ale1}, \cite{arva1}.


The Ricci tensor of the an $G-$invariant metric is also an $G-$invariant tensor and are completely determined by its value at the origin $o$. The componets of the Ricci tensor determined by metric $\Lambda$ in flag manifold $\F(3)$ are computed in \cite{sakane} and \cite{arva2} and are:

\bq\label{ee3}
\left\{
\begin{array}{rcl}
r_{12}&=&\dfrac{1}{2\lambda_{12}}+\dfrac{1}{12}\left(\dfrac{\lambda_{12}}{\lambda_{13}\lambda_{23}}-\dfrac{\lambda_{13}}{\lambda_{12}\lambda_{23}}-\dfrac{\lambda_{23}}{\lambda_{12}\lambda_{13}}\right)\\
r_{13}&=&\dfrac{1}{2\lambda_{13}}+\dfrac{1}{12}\left(\dfrac{\lambda_{13}}{\lambda_{12}\lambda_{23}}-\dfrac{\lambda_{12}}{\lambda_{13}\lambda_{23}}-\dfrac{\lambda_{23}}{\lambda_{12}\lambda_{13}}\right)\\
r_{23}&=&\dfrac{1}{2\lambda_{23}}+\dfrac{1}{12}\left(\dfrac{\lambda_{23}}{\lambda_{12}\lambda_{13}}-\dfrac{\lambda_{13}}{\lambda_{23}\lambda_{12}}-\dfrac{\lambda_{12}}{\lambda_{23}\lambda_{13}}\right)

\end{array}
\right.
\eq

The Ricci flow equation system for left invariant metric on $\F(3)$ is given by \bq\label{r4originalLINO}\dot \lambda_{ij}=-2r_{ij}, \ 1\leq i<j\leq 3.\eq 

\section{Poincaré Compactification}

The study of the behavior of polynomial vector fields at infinity by means of the central projection started in 1881 with Poincaré, while working with planar vector fields. We recommend \cite{velasco} for the detailed description of this method, including a $n$-dimensional version. 

In this section we just sketch the method in three dimensions, to fix the notation. Consider the polynomial differential system \[\left\{\begin{array}{rcl}
\dot x_1&=&P_1(x_1,x_2,x_3),\\
\dot x_2&=&P_2(x_1,x_2,x_3),\\
\dot x_3&=&P_3(x_1,x_2,x_3),\\
\end{array}\right.\]
with the associated vector field $X=(P_1,P_2,P_3)$. The degree of $X$ is defined as $d=\max \{\deg(P_1),\deg(P_2),\deg(P_3)\}$.

Let \[S^3=\{y=(y_1,y_2,y_3,y_4)\in\rn{4}; ||y||=1\}\] be the unit sphere with north hemisphere $S^3_+=\{y\in S^3; y_{4}>0\}$, south hemisphere $S^3_-=\{y\in S^3; y_{4}<0\}$ and equator $S^3_0=\{y\in S^3; y_{4}=0\}$.

Consider the central projections $f_+:\rn{3}\rightarrow S^3_+$ and $f_-:\rn{3}\rightarrow S^3_-$ given by $f_+(x)=\dfrac{1}{\Delta(x)}(x_1,x_2,x_3,1)$ and $f_-(x)=-\dfrac{1}{\Delta(x)}(x_1,x_2,x_3,1),$ where $\Delta(x)=\sqrt{1+x_1^2+x_2^2+x_3^2}$. We shall use coordinates $y=(y_1,y_2,y_3,y_4)$ for a point $y\in S^3$.

As $f_+$ and $f_-$ are homeomorphisms, we can identify $\rn{3}$ with both $S^3_+$ and $S^3_-$. Also note that, for $x\in\rn{3}$, when $||x||\rightarrow\infty$, $f_+(x),f_-(x)\rightarrow S^3_0$. This allow us to identify $S^3_0$ with the infinity of $\rn{3}$. 

The maps $f_+$ and $f_-$ define two copies of $X$, $Df_+(x)X(x)$ in the north hemisphere, based on $f_+(x)$, and $Df_-(x)X(x)$ in the south hemisphere, based on $f_-(x)$. Denote by $\overline{X}$ the vector field on $S^3\setminus S^3_0=S^3_+\cup S^3_-$. In this way, $\overline{X}(y)$ is a vector field in $\rn{4}$ tangent to the set $S^3_+\cup S^3_-$. To extend $\overline{X}(y)$ to the sphere $S^3$, we define the Poincaré compactification of $X$ as\[p(X)(y)=y_{4}^{d-1}\overline{X}(y).\]

Denote by $\pi:\rn{3}\rightarrow\rn{3}$ the application $\pi(x_1,x_2,x_3)=\dfrac{1}{\Delta(x)}(x_1,x_2,x_3)$. Note that $\pi$ shrink $\rn{3}$ to its unitary ball and takes the infinity to the sphere $S^2$.


Now we give an explicit expression in local charts for the vector field $p(X)$. In chart $U_1$, $X$ write as \bq\label{chartU1}\dfrac{z_3^d}{(\Delta(z))^{d-1}}\left(-z_1P_1+P_2,-z_2P_1+P_3,-z_3P_1\right),\eq where $P_1,P_2,P_3$ are functions of $x_1=\dfrac{1}{z_3}$, $x_2=\dfrac{z_1}{z_3}$ and $x_3=\dfrac{z_2}{z_3}$. In chart $U_2$, $X$ write as \bq\label{chartU2}\dfrac{z_3^d}{(\Delta(z))^{d-1}}\left(-z_1P_2+P_1,-z_2P_2+P_3,-z_3P_2\right),\eq where $P_1,P_2,P_3$ are functions of $x_1=\dfrac{z_1}{z_3}$, $x_2=\dfrac{1}{z_3}$ and $x_3=\dfrac{z_2}{z_3}$. In chart $U_3$, $X$ write as \bq\label{chartU3}\dfrac{z_3^d}{(\Delta(z))^{d-1}}\left(-z_1P_3+P_1,-z_2P_3+P_2,-z_3P_3\right),\eq where $P_1,P_2,P_3$ are functions of $x_1=\dfrac{z_1}{z_3}$, $x_2=\dfrac{z_2}{z_3}$ and $x_3=\dfrac{1}{z_3}$.

We can avoid the use of the factor $\dfrac{1}{(\Delta(z))^{d-1}}$ in the expression of $X$. Note that the singularities at infinity have $z_3=0$.

Note that the Poincaré compactification associate $\rn{3}$ to its open ball, and the infinity of $\rn{3}$ to $S^2\subset\rn{3}$. Let us denote the space resulting from this association by $\mathbb{R}_\infty^{3}\cong \rn{3}\cup\{\infty\}$.


%


%

\section{Qualitative behavior of the Ricci flow}

The Ricci flow equation system \eqref{r4originalLINO}, is equivalent (outside the coordinate planes) to the system \[\dot \lambda_{ij}=\rho r_{ij}, \ 1\leq i<j\leq 3,\] with $\rho=\dfrac{6}{\lambda_{12}\lambda_{13}\lambda_{23}}$. This new system is the polynomial quadratic system

\bq\label{r4}
\left\{
\begin{array}{rcl}
\dot \lambda_{12}&=&6\lambda_{13}\lambda_{23}+\lambda_{12}^{2}-
\lambda_{13}^{2}-\lambda_{23}^{2}\\
\dot \lambda_{13}&=&6\lambda_{{12}}\lambda_{{23}}+\lambda_{13}^{2}-\lambda_{12}^{2}-\lambda_{23}^{2}\\
\dot \lambda_{23}&=&6\lambda_{{12}}\lambda_{{13}}+\lambda_{23}^{2}-\lambda_{13}^{2}-\lambda_{12}^{2}
\end{array}
\right.
\eq

We remark that this system just have a geometric meaning for $\lambda_{12},\lambda_{13}\lambda_{23}>0$. Note that the unique singularity of system \eqref{r4} is the origin, and this singularity appears from the multiplication of \eqref{ee3} by $\rho$. 

Our first result is the following: 

\begin{thm}\label{ft}Consider the flow $\psi_t:\rn{3}\rightarrow\rn{3}$, $t\in(-\delta,\delta)$ for some $\delta\in(0,\infty]$, associated with system \eqref{r4originalLINO} (or system \eqref{r4} with the flow restricted to the first octant). There is no $p\in\rn{3}$ such that $\psi_t(p)=p$ for all $t$. In other words, the flow associated to \eqref{r4originalLINO} has no singularities in finite time.\end{thm}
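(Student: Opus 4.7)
My plan is to show that the equations $r_{12}=r_{13}=r_{23}=0$ admit no simultaneous solution with $\lambda_{12}\lambda_{13}\lambda_{23}\neq 0$. Equilibria of \eqref{r4originalLINO} are exactly the common zeros of the $r_{ij}$ in the domain where the Ricci tensor is defined, namely $\{\lambda_{12}\lambda_{13}\lambda_{23}\neq 0\}$. System \eqref{r4} is obtained by multiplying by the positive factor $\rho=6/(\lambda_{12}\lambda_{13}\lambda_{23})$, so inside the open first octant it has the same equilibria; the origin is a zero of the polynomial vector field but lies outside that octant.

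Setting $a=\lambda_{12}$, $b=\lambda_{13}$, $c=\lambda_{23}$ and clearing the denominator $12\,abc$ in each $r_{ij}$, the equilibrium conditions become
\begin{align*}
6bc+a^{2}-b^{2}-c^{2}&=0,\\
6ac+b^{2}-a^{2}-c^{2}&=0,\\
6ab+c^{2}-a^{2}-b^{2}&=0.
\end{align*}
Taking pairwise differences and factoring yields
\begin{align*}
(a-b)(a+b-3c)&=0,\\
(b-c)(b+c-3a)&=0,\\
(a-c)(a+c-3b)&=0,
\end{align*}
so for each pair of variables either they coincide or they sum to three times the remaining one.

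A short case analysis then isolates $a=b=c=0$ as the only real solution. If $a=b=c$, substitution into the first equation gives $5a^{2}=0$. If exactly two coincide, say $a=b\neq c$, the other two factorizations force $c=2a$; substituting into the first equation yields $8a^{2}=0$, hence $a=0$, which contradicts $a\neq c$. If $a,b,c$ are pairwise distinct, all three linear factors must vanish, and adding those three relations produces $2(a+b+c)=3(a+b+c)$, so $a+b+c=0$; re-substituting into any one of them forces every coordinate to vanish, contradicting distinctness.

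Since the only common zero of the $r_{ij}$ (equivalently, of the polynomial right-hand side of \eqref{r4}) is the origin, which is excluded both from the domain of \eqref{r4originalLINO} and from the open first octant, the Ricci flow admits no equilibrium in finite time. The only delicate point is keeping the case enumeration exhaustive in the factorization argument, but each branch reduces to a single-variable computation, so I do not expect any genuine obstacle.
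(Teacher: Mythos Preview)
Your argument is correct. The paper itself does not give a separate proof of this theorem: it simply remarks, just before the statement, that ``the unique singularity of system \eqref{r4} is the origin, and this singularity appears from the multiplication of \eqref{ee3} by $\rho$,'' and Theorem~\ref{ft} is then an immediate consequence. Your proof follows exactly this route---showing that the only common zero of the polynomial right-hand sides is the origin, which lies outside the first octant---but you actually carry out the algebra (pairwise differences, factorization, and case analysis) that the paper leaves implicit. So the approach is the same; you have supplied the verification the paper omits.
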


\begin{rem}Theorem \ref{ft}, in the context of flag manifolds, says that the Ricci flow of left-invariant metrics do not have singularities in finite time with an arbritraty initial condition, constrating with the Ricci flow of general metrics (see \cite{tian}).
\end{rem}

Writing \eqref{r4} in the local charts given by \eqref{chartU1}, \eqref{chartU2} and \eqref{chartU3}, we obtain three polynomial differential systems, each of them with $7$ singularities at $z_3=0$ (at infinity). Some of these singularities are in the intersection of the charts, so  collecting all of them we have a set with $10$ distinct singularities at infinity (as points of the equator of $S^3$). 

Now if we denote by $p_j'$ the point in $\pi(\rn{3})$ corresponding to $p_j$, we have that just 
\[\begin{array}{l}
p_1'=\left(1/\rho_1, (1/\rho_1)(2+2\sqrt{2}), 1/\rho_1\right)\sim (0.198756,0.959682,0.198756), \ \\
p_2'=\left(1/\rho_2,1/\rho_2,1/\rho_2\right)\sim (0.577350,0.577350,0.577350) \\
p_3'=\left(\rho_3(-1/2+\sqrt{2}/2),\rho_3(-1/2+\sqrt{2}/2),\rho_3\right)\sim(0.198756,0.198756,0.959682)\\
p_4'=\left((1/\rho_1)(2+2\sqrt{2}),1/\rho_1,1/\rho_1\right)\sim(0.959682,0.198756,0.198756)
\end{array}\] 
\noindent are in the first octant, where $1/\rho_3=\sqrt{1+2({-1/2+1/2\sqrt{2}})^2}$, $\rho_1=\sqrt{2+(2+\sqrt{2})^2}$ and $\rho_2=\sqrt{3}$. We will just consider these singularities, as system \eqref{r4} just make sense in the positive octant.


Consider $\gamma_j:[0,1]\rightarrow\pi(\rn{3})$ given by $\gamma_j(t)=tp_j'$. Note that $\pi^{-1}\circ\gamma_j:(0,1)\rightarrow\rn{3}$ is also a straight line in $\rn{3}$ passing through $p_j''$ ($p_j''$ is the point in $\rn{3}$ with the same coordinates of $p_j'$), with $(\pi^{-1}\circ\gamma_j)(t)\rightarrow\infty$ when $t\rightarrow 1$.

\begin{thm}\label{t4}The lines $\gamma_1$, $\gamma_2$, $\gamma_3$ and $\gamma_4$ are solutions for \eqref{r4} (and for \eqref{r4originalLINO}) passing through $p_1''$, $p_2''$, $p_3''$ and $p_4''$ respectively. 
\end{thm}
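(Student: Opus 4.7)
The proof hinges on the fact that the vector field $X$ defined by the right-hand side of \eqref{r4} is homogeneous of degree $2$, so $X(s\lambda)=s^{2}X(\lambda)$ for every $s>0$ and $\lambda\in\mathbb{R}^{3}$. For such vector fields, a ray from the origin in direction $p\ne 0$ is an orbit of the flow (up to time reparametrization) if and only if $X(p)$ is parallel to $p$, and this is exactly the condition that characterizes the singularities at infinity of the Poincar\'e compactification after the common nonzero factor $z_{3}^{d}/(\Delta(z))^{d-1}$ is stripped from \eqref{chartU1}--\eqref{chartU3}. The first step is therefore to record that each of the four directions $p_{j}''$, having been identified in the preceding paragraph as producing a Poincar\'e singularity at infinity in the first octant, automatically satisfies $X(p_{j}'')=c_{j}\,p_{j}''$ for some $c_{j}\in\mathbb{R}$. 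Alternatively one may substitute the explicit coordinates of each $p_{j}''$ into \eqref{r4} and verify the three scalar identities by hand.

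Next I would parametrize the candidate orbit as $\lambda(t)=\phi(t)\,p_{j}''$ with $\phi(0)=1$. Substituting into \eqref{r4} and using homogeneity gives
$$\dot\phi(t)\,p_{j}'' \;=\; X(\phi(t)\,p_{j}'') \;=\; \phi(t)^{2}X(p_{j}'') \;=\; c_{j}\phi(t)^{2}\,p_{j}'',$$
which collapses to the scalar ODE $\dot\phi=c_{j}\phi^{2}$, solved on its maximal interval of existence by $\phi(t)=1/(1-c_{j}t)$. By Theorem \ref{ft} the point $p_{j}''$ is not an equilibrium, so $c_{j}\ne 0$ and the image of $\phi$ is the entire positive half-line $(0,\infty)$. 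Hence the orbit of \eqref{r4} starting at $p_{j}''$ sweeps out exactly the open ray $\pi^{-1}\circ\gamma_{j}$, proving the claim for \eqref{r4}. To transfer the conclusion to \eqref{r4originalLINO} I would invoke the observation made at the beginning of Section 4: on the first octant the two systems differ only by multiplication by the nowhere-vanishing scalar $\rho=6/(\lambda_{12}\lambda_{13}\lambda_{23})$. Such multiplication is a pure time reparametrization, so the orbits as subsets of $\mathbb{R}^{3}$ are identical, and the same four rays are solutions of \eqref{r4originalLINO} through the same points $p_{j}''$.

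The main obstacle is computational rather than structural: if one prefers not to rely on the Poincar\'e analysis, the identities $X(p_{j}'')=c_{j}p_{j}''$ must be checked directly at points whose coordinates involve nested radicals such as $\sqrt{2+(2+\sqrt{2})^{2}}$, which is tedious but entirely routine. Modulo that calculation, the theorem follows at once from the homogeneity of $X$.
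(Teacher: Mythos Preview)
Your proof is correct and amounts to exactly the straightforward calculation the paper has in mind: checking that each direction $p_{j}''$ satisfies $X(p_{j}'')\parallel p_{j}''$, which by the homogeneity of the quadratic field $X$ forces the ray through $p_{j}''$ to be an orbit (and then passing to \eqref{r4originalLINO} by the time rescaling $\rho$). The paper's own proof is the single sentence ``This proof is a straightforward calculation,'' so your version is already more explicit than what appears there.
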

\begin{proof}This proof is a straightforward calculation.
\end{proof}

Note that $\tilde{\gamma}_j=\pi^{-1}\circ\gamma_j:(0,1)\rightarrow\rn{3}$ has the following behavior: $\lim_{t\rightarrow 1^-} \tilde{\gamma}_j(t)=\infty_j$ and $\lim_{t\rightarrow 0^+} \tilde{\gamma}_j(t)=0$, where $\infty_j$ is just a notation to make explicit the different directions of approaching to the infinity.

Note also that the points $p_1,p_3,p_4$ are saddles and $p_2$ is a attractor (this can be see looking at the jacobian matrix of each point in (any of) the local chart coordinates).

Theorem \ref{t4} provide us 4 solutions for \eqref{r4} (and for \eqref{r4originalLINO}). Now we study the stability of these solutions using Lyapunov exponents.

The procedure is the following:\\
\noindent (i) first we compute the expression of $X$ in the charts $U_1$, $U_2$ and $U_3$ using equations \eqref{chartU1}, \eqref{chartU2} and \eqref{chartU3};\\
\noindent (ii) then we compute the Lyapunov exponents for the point $p_j''$ in all of the charts $U_j$;\\
\noindent (iii) if all of the Lyapunov exponents of $p_j''$ are negative (in all of the charts) then there is a cylinder $C_j$ around the line $\gamma_j$, outside of a small ball in the origin, such that the solutions beginning at a point in $C_j$ goes to $p_j$ in the compactification.

Table \ref{ttt1} show the Lyapunov exponents $\lambda_1$, $\lambda_2$ and $\lambda_3$ for solutions $\gamma_1,\gamma_2,\gamma_3,\gamma_4$ in chart $U_1$. The calculations for charts $U_2$ and $U_3$ are similar.

\begin{center}
\begin{table}[h]\centering
\begin{tabular}{c|cccc} 
&$\gamma_1$&$\gamma_2$&$\gamma_3$&$\gamma_4$\\
\hline 
$\lambda_1$ & -0.254589 &  -0.245719 & -0.26002&-0.260018\\
$\lambda_2$ & -0.325068 & -0.333946& -0.31964 &-0.319638\\
$\lambda_3$ & -0.315206 & -0.315967 &  -0.31521&-0.315206\\
\hline
\end{tabular}
\caption{Lyapunov exponents for singularities at infinity in chart $U_1$.}
\label{ttt1}
\end{table}
\end{center}

For the proof of the next theorem follow from the calculations in Table \ref{ttt1}.

\begin{thm}\label{thmp1}Consider $\epsilon>0$ sufficiently small and $p=(x_0,y_0,z_0)$ with $x_0$, $y_0$ and $z_0$ positives, $x_0^2+y_0^2+z_0^2>\delta$ for some $\delta>1/2$ and $d(p,\gamma_{j})<\epsilon$ for some $j=1,2,3,4$. Then the solution $s$ of \eqref{r4} with $s(0)=p$ satisfy $d(s,\gamma_j)<\epsilon$ and terminates at $p_j$ (in the compactification).
\end{thm}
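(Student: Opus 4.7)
The plan is to lift the local attracting behavior of the singularities $p_j$ of the Poincar\'e-compactified vector field $p(X)$ on the sphere at infinity back to tubular neighborhoods of the invariant rays $\tilde{\gamma}_j$ in $\rn{3}$. By Theorem \ref{t4}, each $\tilde{\gamma}_j$ is itself a trajectory of \eqref{r4} whose $\omega$-limit in the compactification is $p_j \in S^3_0$. The Lyapunov exponents of $p_j$ in chart $U_1$ recorded in Table \ref{ttt1}, and those in charts $U_2, U_3$ computed analogously, are all strictly negative, so the linearization of $p(X)$ at each $p_j$ is hyperbolic with only negative real-part eigenvalues.

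First I would apply the stable-manifold theorem (or Hartman--Grobman) to $p(X)$ at each hyperbolic sink $p_j$. This produces an open neighborhood $V_j$ of $p_j$ in $S^3$ such that every trajectory of $p(X)$ entering $V_j$ stays in $V_j$ for all positive time and converges exponentially to $p_j$, with rate controlled by the largest Lyapunov exponent in the relevant column of Table \ref{ttt1}. The standard references (see \cite{wiggins}) guarantee that the basin of attraction is open in the compactified phase space.

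Next I would transfer $V_j$ back to $\rn{3}$ via the central projection: set $W_j = f_+^{-1}(V_j \cap S^3_+)$. Because $\pi$ is a diffeomorphism onto the open unit ball and the ray $\gamma_j$ accumulates on $p_j' \in S^2$, the preimage $W_j$ is an unbounded open set whose shape, for points of large Euclidean norm, becomes indistinguishable from a round cylinder about $\tilde{\gamma}_j$. Quantitatively, one shows that there exist $R > 0$ and $\epsilon_0 > 0$ such that for every $\epsilon \in (0,\epsilon_0)$ the truncated cylinder
\[
C_j(\epsilon, R) = \{ q \in \rn{3} : q \text{ has positive coordinates}, \ \|q\| > R, \ d(q, \tilde{\gamma}_j) < \epsilon \}
\]
is contained in $W_j$. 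Taking $\delta > 1/2$ large enough that $\|p\|^2 > \delta$ forces $\|p\| > R$, the initial condition $p$ of the theorem lies in $W_j$, and the stable-manifold statement yields both the invariance $d(s(t),\tilde{\gamma}_j)<\epsilon$ and termination at $p_j$ in the compactification.

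The principal technical obstacle is precisely the geometric estimate in the previous paragraph: that an $\epsilon$-tube about $\tilde{\gamma}_j$ outside a large ball pushes forward under $f_+$ into the neighborhood $V_j$ produced by the stable-manifold theorem. This is a direct but tedious computation combining the explicit form of $f_+$, the known coordinates of $p_j'$, and a uniform estimate translating Euclidean closeness to $\tilde{\gamma}_j$ in $\rn{3}$ into spherical closeness to $p_j$ on $S^3$. All the genuine dynamical content, namely exponential contraction at $p_j$, is already encoded in the Lyapunov exponents of Table \ref{ttt1} and its unstated analogues for $U_2$ and $U_3$.
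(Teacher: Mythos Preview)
Your argument rests on the claim that the linearization of $p(X)$ at each $p_j$ has only eigenvalues with negative real part, so that each $p_j$ is a hyperbolic sink and Hartman--Grobman gives an open basin of attraction. That claim is false for three of the four points: the paper states explicitly, just before the theorem, that $p_1,p_3,p_4$ are saddles and only $p_2$ is an attractor (this is read off from the Jacobian in the local charts). So for $j\in\{1,3,4\}$ there is at least one expanding eigendirection at $p_j$, and your stable-manifold/sink argument collapses there. You have conflated two different objects: the entries of Table~\ref{ttt1} are Lyapunov exponents computed along the \emph{trajectories} $\gamma_j$ through the finite points $p_j''$, not the eigenvalues of the linearization at the equilibria $p_j$ on the sphere at infinity; negativity of the former does not imply the latter are all negative.

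The paper's own proof is not the linearization argument you give. It is the numerical Lyapunov-exponent computation itself: step~(ii) of the procedure computes the exponents along each $\gamma_j$ in every chart, and step~(iii) invokes the standard fact (cf.\ \cite{wiggins}) that a trajectory all of whose Lyapunov exponents are negative is orbitally attracting, so a tubular $\epsilon$-cylinder about $\gamma_j$ (away from the origin) is forward-invariant and its orbits terminate at $p_j$. This criterion applies even when the endpoint $p_j$ is a saddle, because it concerns contraction transverse to the specific orbit $\gamma_j$, not the full local dynamics near $p_j$. If you want to repair your approach for $j=1,3,4$, you would have to show that the unstable directions of the saddle $p_j$ lie entirely in the invariant equator $S^3_0$, so that interior trajectories near $\gamma_j$ never feel them; that is a separate computation you have not carried out.
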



\begin{rem}The value $\delta>1/2$ in above Theorems is not the smaller possible. Due to numerical instability of system \eqref{r4} near the origin, we need to take initial values outside a ball $B(0,\delta)$, $\delta>0$. We choose $\delta=1/2$ just for avoid numeric complications when calculating the Lyapunov exponents.
\end{rem}

Now we translate Theorem \ref{thmp1} to a statement about the geometry of the flag manifold $SU(3)/T$.

\begin{thm}Consider $\epsilon>0$ sufficiently small and $p=(\lambda_{12},\lambda_{13},\lambda_{23})$ with $\lambda_{ij}>0$, $||p||>\delta$ for $\delta>1/2$ and $d(p,\gamma_{j})<\epsilon$ for some $j=1,2,3,4$. Let $g_t$ be the Ricci flow with $g_0$ the metric defined by $(\lambda_{12},\lambda_{13},\lambda_{23})$.\\
\noindent (i) If $j\in\{1,3,4\}$ then  $g_\infty$ is a Einstein metric.\\ 
\noindent (ii) if $j=2$, $g_\infty$ is a normal (Einstein) metric. In particular, if $p\notin \gamma_{2}$ then { $g_0$ is left-invariant} and $g_\infty$ is bi-invariant.\\

\end{thm}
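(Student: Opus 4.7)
The plan is to translate the dynamical conclusion of Theorem~\ref{thmp1} into geometric language, via the dictionary between points of the positive octant of $\rn{3}$ and $SU(3)$-invariant metrics on $\F(3)$.

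First, I would identify each invariant ray $\gamma_j$ with an Einstein direction. A ray $\{tq:t>0\}$ through the origin is invariant for the autonomous system $\dot\lambda_{ij}=-2r_{ij}$ if and only if $r_{ij}(q)=c(q)\,q_{ij}$ for a common scalar $c(q)$, which is exactly the Einstein condition for the invariant metric associated with $q$. Hence the four rays $\gamma_1,\gamma_2,\gamma_3,\gamma_4$ of Theorem~\ref{t4} encode the four Einstein metrics on $SU(3)/T$: the normal (bi-invariant) metric on the diagonal ray $p_2'\propto(1,1,1)$, where $\lambda_{12}=\lambda_{13}=\lambda_{23}$, and three K\"ahler--Einstein metrics attached to $p_1',p_3',p_4'$.

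Next, I would invoke Theorem~\ref{thmp1}: the trajectory of \eqref{r4} through $p$ stays within $\epsilon$ of $\gamma_j$ and terminates at $p_j$ in the Poincar\'e compactification. Because \eqref{r4} differs from \eqref{r4originalLINO} by multiplication by $\rho$, which has constant sign on the positive octant, the two systems share the same unparameterized orbits; hence the projective shape $[\lambda_{12}(t):\lambda_{13}(t):\lambda_{23}(t)]$ of the Ricci flow also converges to $p_j'$ as $t$ approaches its terminal time. Translating back, $g_t$ converges, up to rescaling of the invariant direction, to the Einstein metric determined by $p_j'$, which establishes (i) and the first assertion of (ii).

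For the ``in particular'' clause of (ii), note that the bi-invariant locus in the cone of invariant metrics is exactly the diagonal ray $\{\lambda_{12}=\lambda_{13}=\lambda_{23}\}=\gamma_2$, since this condition turns $B$ into a positive multiple of $K|_{\gotico{m}}$, which is the restriction of the bi-invariant normal metric on $SU(3)$. If $p\notin\gamma_2$, then $g_0$ is strictly left-invariant while the limit $g_\infty\in\gamma_2$ is bi-invariant. The delicate step I expect is the bookkeeping between \eqref{r4} and the genuine Ricci flow \eqref{r4originalLINO}: one has to verify that the negative Lyapunov exponents of Table~\ref{ttt1}, which yield attraction inside \eqref{r4}, translate into the correct forward-time convergence of the projective shape of $g_t$ after accounting for the reparameterization by $\rho$ and for the distinction between finite-time collapse of the metric components and the well-defined asymptotic direction; once that is pinned down, the remaining geometric identifications are routine.
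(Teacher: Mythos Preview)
Your proposal is correct and matches the paper's own approach: the paper gives no independent proof of this theorem, presenting it purely as a geometric translation of Theorem~\ref{thmp1} via the identification of invariant rays with Einstein metrics and of the diagonal ray with the normal (bi-invariant) metric. If anything, you are more scrupulous than the paper itself, since you flag the reparameterization issue between \eqref{r4} and \eqref{r4originalLINO} (the factor $\rho$) and the passage from convergence in the compactification to convergence of the projective class of $g_t$, points the paper leaves entirely implicit.
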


\section*{Acknowledgments}

We would like to thanks Pedro Catuogno and Caio Negreiros for suggesting us the problem and Regilene Oliveira for some help with the Poincar\'e compactification. This research was partially supported by FAPESP grant 2007/05215-4 (Martins) and by CAPES and CNPq grants (Grama).

\end{document}